\definecolor{webgreen}{rgb}{0,.5,0}
\definecolor{webbrown}{rgb}{.6,0,0}
\tikzset{circle node/.style = {circle,inner sep=1pt,draw, fill=white},
        X node/.style = {fill=white, inner sep=1pt},
        dot node/.style = {circle, draw, inner sep=5pt}
        }
\newtheorem{theorem}{Theorem}
\newtheorem{proposition}[theorem]{Proposition}
\newtheorem{corollary}[theorem]{Corollary}
\theoremstyle{definition}
\newcommand{\seqnum}[1]{\href{http://oeis.org/#1}{\underline{#1}}}
\begin{document}

\begin{center}
\vskip 1cm{\LARGE\bf Generalized Eulerian Triangles and Some Special Production Matrices} \vskip 1cm \large
Paul Barry\\
School of Science\\
Waterford Institute of Technology\\
Ireland\\
\href{mailto:pbarry@wit.ie}{\tt pbarry@wit.ie}
\end{center}
\vskip .2 in

\begin{abstract} We show how some special production matrices may be used to define families of generalized Eulerian triangles. We furthermore show that these generalized Eulerian triangles are the coefficient arrays of polynomials which are the moments of families of orthogonal polynomials. Using the previously defined $\mathcal{T}$ transform, we associate these generalized Eulerian triangles to triangles defined by Catalan generating functions. Again, these new triangles are the coefficient arrays of polynomial moment sequences. The main tools used are the Sumudu transform, Jacobi continued fractions and Riordan arrays.
\end{abstract}

\section{Preliminaries}
In this note, we shall generalize the Eulerian triangles of type $A$ and $B$ \cite{Petersen} by using parameters arising from the production matrices \cite{ProdMat_0, ProdMat} of special number triangles that are related to the original Eulerian triangles through the Sumudu transform \cite{Belgacem2, Belgacem1, Watugala} and the taking of reciprocals. Regarding each generalized Eulerian triangle as the coefficient matrix of a family of polynomials allows us to show that the resulting polynomials are the moments for a related family of orthogonal polynomials \cite{Chihara, Gautschi, Szego}. We use techniques of exponential Riordan arrays \cite{Book, DeutschShap} to show this. We also relate, via the $\mathcal{T}$ transform \cite{T}, these newly found polynomial families to families of polynomials defined by ordinary Riordan arrays \cite{Book, Survey, SGWW, Spru}. We have previously shown that the standard Eulerian polynomials are moments \cite{Eulerian} using techniques of Riordan arrays and orthogonal polynomials \cite{Classical, Barry_Meixner}. We find the language of continued fractions useful in this context \cite{CFT, Wall}.
The notation
$$\mathcal{J}(a,b,c,\ldots; \alpha, \beta, \gamma,\ldots)$$ signifies a Jacobi-type continued fraction
$$\cfrac{1}{1- ax-
\cfrac{\alpha x^2}{1-bx-
\cfrac{\beta x^2}{1- cx-
\cfrac{\gamma x^2}{1-\cdots}}}}.$$
The generating function of the Catalan numbers $C_n=\frac{1}{n+1}\binom{2n}{n}$ will be denoted by $c(x)$. We have $$c(x)=\frac{1-\sqrt{1-4x}}{2x}.$$
 
Integer sequences will be referred to by their $Annnnnn$ in the On-Line Encyclopedia of Integer Sequences \cite{SL1, SL2}. 

\section{Introduction}
The three Eulerian triangles of type $A$, which we shall denote by $E_1$, $E_2$ and $E_3$, have bivariate generating functions, respectively, of
$$\frac{1-r}{e^{rt}-re^t},\quad \frac{(1-r)e^{rt}}{e^{rt}-re^t},\quad\text{and} \quad\frac{(r-1)^2 e^{(r+1)t}}{(e^{rt}-re^t)^2}.$$ For instance, $E_3$ is the centrally symmetric (or Pascal-like) triangle that begins
$$\left(
\begin{array}{ccccccc}
 1 & 0 & 0 & 0 & 0 & 0 & 0 \\
 1 & 1 & 0 & 0 & 0 & 0 & 0 \\
 1 & 4 & 1 & 0 & 0 & 0 & 0 \\
 1 & 11 & 11 & 1 & 0 & 0 & 0 \\
 1 & 26 & 66 & 26 & 1 & 0 & 0 \\
 1 & 57 & 302 & 302 & 57 & 1 & 0 \\
 1 & 120 & 1191 & 2416 & 1191 & 120 & 1 \\
\end{array}
\right).$$ These are, respectively, \seqnum{A173018}, \seqnum{A123125}, and \seqnum{A008292}. 
These generating functions are exponential in $t$ and ordinary in $r$. For each of these generating functions,
we calculate the Sumudu transform (in $t$) of its reciprocal (multiplicative inverse). We obtain, respectively, the following ordinary generating functions
$$\frac{1-(r+1)x}{(1-x)(1-rx)}, \quad \frac{1-x}{1+(r-1)x},\quad\text{and}\quad \frac{1-(r+1)x+2rx^2}{1-(r-1)^2x^2}.$$
These are the generating functions of the three number triangles that begin, respectively, as
$$\left(
\begin{array}{ccccccc}
 1 & 0 & 0 & 0 & 0 & 0 & 0 \\
 0 & 0 & 0 & 0 & 0 & 0 & 0 \\
 0 & -1 & 0 & 0 & 0 & 0 & 0 \\
 0 & -1 & -1 & 0 & 0 & 0 & 0 \\
 0 & -1 & -1 & -1 & 0 & 0 & 0 \\
 0 & -1 & -1 & -1 & -1 & 0 & 0 \\
 0 & -1 & -1 & -1 & -1 & -1 & 0 \\
\end{array}
\right),$$
$$\left(
\begin{array}{ccccccc}
 1 & 0 & 0 & 0 & 0 & 0 & 0 \\
 0 & -1 & 0 & 0 & 0 & 0 & 0 \\
 0 & -1 & 1 & 0 & 0 & 0 & 0 \\
 0 & -1 & 2 & -1 & 0 & 0 & 0 \\
 0 & -1 & 3 & -3 & 1 & 0 & 0 \\
 0 & -1 & 4 & -6 & 4 & -1 & 0 \\
 0 & -1 & 5 & -10 & 10 & -5 & 1 \\
\end{array}
\right),$$
and
$$\left(
\begin{array}{ccccccc}
 1 & 0 & 0 & 0 & 0 & 0 & 0 \\
 -1 & -1 & 0 & 0 & 0 & 0 & 0 \\
 1 & 0 & 1 & 0 & 0 & 0 & 0 \\
 -1 & 1 & 1 & -1 & 0 & 0 & 0 \\
 1 & -2 & 2 & -2 & 1 & 0 & 0 \\
 -1 & 3 & -2 & -2 & 3 & -1 & 0 \\
 1 & -4 & 7 & -8 & 7 & -4 & 1 \\
\end{array}
\right).$$
We call these triangles $T_1$, $T_2$ and $T_3$, respectively. With regard to $T_3$, we note that
$$ \frac{1-(r+1)x+2rx^2}{1-(r-1)^2x^2}=\frac{1}{1-(r-1)x}-\frac{2rx(1-x)}{1-(r-1)^2x^2}.$$
Thus the general term of $T_3$ is given by
$$(-1)^{n-k}\binom{n}{k}-2 (-1)^{n-k}\binom{2 \lfloor \frac{n-1}{2} \rfloor}{ 2 \lfloor \frac{n-1}{2} \rfloor-k+1}.$$

The Eulerian numbers of type $B$ \seqnum{A060187} have generating function
$$\frac{(1-r) e^{(1-r) t}}{1-r e^{2 (1-r) t}}.$$ They define the number triangle $E_B$ that begins
$$\left(
\begin{array}{ccccccc}
 1 & 0 & 0 & 0 & 0 & 0 & 0 \\
 1 & 1 & 0 & 0 & 0 & 0 & 0 \\
 1 & 6 & 1 & 0 & 0 & 0 & 0 \\
 1 & 23 & 23 & 1 & 0 & 0 & 0 \\
 1 & 76 & 230 & 76 & 1 & 0 & 0 \\
 1 & 237 & 1682 & 1682 & 237 & 1 & 0 \\
 1 & 722 & 10543 & 23548 & 10543 & 722 & 1 \\
\end{array}
\right).$$
We again take the Sumudu transform of the reciprocal of the above generating function to get
$$\frac{1-(r+1)x}{1-(r-1)^2x^2}.$$ This is the generating function of the triangle $T_B$ that begins
$$\left(
\begin{array}{ccccccc}
 1 & 0 & 0 & 0 & 0 & 0 & 0 \\
 -1 & -1 & 0 & 0 & 0 & 0 & 0 \\
 1 & -2 & 1 & 0 & 0 & 0 & 0 \\
 -1 & 1 & 1 & -1 & 0 & 0 & 0 \\
 1 & -4 & 6 & -4 & 1 & 0 & 0 \\
 -1 & 3 & -2 & -2 & 3 & -1 & 0 \\
 1 & -6 & 15 & -20 & 15 & -6 & 1 \\
\end{array}
\right).$$ This is equal to
$$\left(
\begin{array}{ccccccc}
 1 & 0 & 0 & 0 & 0 & 0 & 0 \\
 -1 & 1 & 0 & 0 & 0 & 0 & 0 \\
 1 & -2 & 1 & 0 & 0 & 0 & 0 \\
 -1 & 3 & -3 & 1 & 0 & 0 & 0 \\
 1 & -4 & 6 & -4 & 1 & 0 & 0 \\
 -1 & 5 & -10 & 10 & -5 & 1 & 0 \\
 1 & -6 & 15 & -20 & 15 & -6 & 1 \\
\end{array}
\right)-\left(
\begin{array}{ccccccc}
 0 & 0 & 0 & 0 & 0 & 0 & 0 \\
 0 & -1 & 0 & 0 & 0 & 0 & 0 \\
 0 & 0 & 0 & 0 & 0 & 0 & 0 \\
 0 & -1 & 2 & -1 & 0 & 0 & 0 \\
 0 & 0 & 0 & 0 & 0 & 0 & 0 \\
 0 & -1 & 4 & -6 & 4 & -1 & 0 \\
 0 & 0 & 0 & 0 & 0 & 0 & 0 \\
\end{array}
\right).$$
We calculate the production matrices of the triangles $T_3^{-1}$ and $T_B^{-1}$. We get, respectively, production matrices that begin
$$\left(
\begin{array}{ccccccc}
 -1 & -1 & 0 & 0 & 0 & 0 & 0 \\
 2 & 1 & -1 & 0 & 0 & 0 & 0 \\
 -4 & -2 & 1 & -1 & 0 & 0 & 0 \\
 4 & 4 & 2 & 1 & -1 & 0 & 0 \\
 -8 & -8 & -4 & -2 & 1 & -1 & 0 \\
 8 & 8 & 4 & 4 & 2 & 1 & -1 \\
 -16 & -16 & -8 & -8 & -4 & -2 & 1 \\
\end{array}
\right)$$
and
$$\left(
\begin{array}{ccccccc}
 -1 & -1 & 0 & 0 & 0 & 0 & 0 \\
 4 & 3 & -1 & 0 & 0 & 0 & 0 \\
 0 & 0 & -1 & -1 & 0 & 0 & 0 \\
 0 & 0 & 4 & 3 & -1 & 0 & 0 \\
 0 & 0 & 0 & 0 & -1 & -1 & 0 \\
 0 & 0 & 0 & 0 & 4 & 3 & -1 \\
 0 & 0 & 0 & 0 & 0 & 0 & -1 \\
\end{array}
\right).$$
In each case, the structure is that of an initial column, followed by alternating columns that are the same (subject to shifting). They can be seen as generalizing the production matrices of ordinary Riordan arrays. We shall use this structure in the next two sections to define generalized Eulerian triangles of type $A$ and $B$.
\section{Generalized Eulerian triangles of type $A$}
To generalize $E_3$, we generalize the production matrix
$$\left(
\begin{array}{ccccccc}
 -1 & -1 & 0 & 0 & 0 & 0 & 0 \\
 2 & 1 & -1 & 0 & 0 & 0 & 0 \\
 -4 & -2 & 1 & -1 & 0 & 0 & 0 \\
 4 & 4 & 2 & 1 & -1 & 0 & 0 \\
 -8 & -8 & -4 & -2 & 1 & -1 & 0 \\
 8 & 8 & 4 & 4 & 2 & 1 & -1 \\
 -16 & -16 & -8 & -8 & -4 & -2 & 1 \\
\end{array}
\right)$$ to get the parameterized triangle that begins
$$\left(
\begin{array}{ccccccc}
 -1 & -1 & 0 & 0 & 0 & 0 & 0 \\
 a & 1 & -1 & 0 & 0 & 0 & 0 \\
 -a^2 & -a & 1 & -1 & 0 & 0 & 0 \\
 a^2 & a^2 & a & 1 & -1 & 0 & 0 \\
 -a^3 & -a^3 & -a^2 & -a & 1 & -1 & 0 \\
 a^3 & a^3 & a^2 & a^2 & a & 1 & -1 \\
 -a^4 & -a^4 & -a^3 & -a^3 & -a^2 & -a & 1 \\
\end{array}
\right).$$
We find that only for $a=0$ and $a=2$ is the resulting triangle (the inverse of the triangle with the given production matrix) centrally symmetric. The case $a=2$ is the case of $E_3$.
\begin{proposition} For $a=0$, the resulting triangle (the inverse of the triangle with the given production matrix) is the Riordan array
$$\left(\frac{1-2x}{1-x}, \frac{x}{x-1}\right).$$
\end{proposition}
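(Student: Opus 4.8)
The plan is to prove the equivalent statement that the array $B$ generated by the given production matrix at $a=0$ — call it $P_0$, so that $P_0$ is the bidiagonal matrix with diagonal $(-1,1,1,1,\dots)$ and every superdiagonal entry equal to $-1$ — coincides with the ordinary Riordan array $M := \left(\frac{1}{1+x}, \frac{x}{x-1}\right)$. Granting this, the statement follows immediately upon inverting in the Riordan group, since the resulting triangle is by definition $B^{-1}$ and I will have shown $B^{-1} = M^{-1}$, which I claim equals $\left(\frac{1-2x}{1-x}, \frac{x}{x-1}\right)$.

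First I would record the relevant Riordan inverse. Writing $f(x) = \frac{x}{x-1}$, solving $y = \frac{x}{x-1}$ for $x$ shows that $f$ is an involution, $\bar f = f$, which streamlines everything. Applying the group law $(g,f)^{-1} = \left(\frac{1}{g\circ \bar f},\, \bar f\right)$ to $g(x) = \frac{1-2x}{1-x}$, a one-line simplification over the common denominator $x-1$ gives $g(\bar f(x)) = x+1$, so that $\left(\frac{1-2x}{1-x}, \frac{x}{x-1}\right)^{-1} = \left(\frac{1}{1+x}, \frac{x}{x-1}\right) = M$, and hence $M^{-1} = \left(\frac{1-2x}{1-x}, \frac{x}{x-1}\right)$ as needed.

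The heart of the argument is to show that the production matrix of $M$ is exactly $P_0$; since the array determined by a production matrix is unique (and $M$ has first row $(1,0,0,\dots)$), this identifies $M$ with $B$. Here I would invoke the $A$- and $Z$-sequence description of the Stieltjes matrix of an ordinary Riordan array: its first column is the $Z$-sequence, and from the next column on it is Toeplitz in the $A$-sequence. The $A$-sequence has generating function $A(x) = x/\bar f(x)$; using $\bar f = f$ this is $A(x) = x\cdot\frac{x-1}{x} = x-1$, i.e. the sequence $(-1,1,0,0,\dots)$, which yields superdiagonal entries $-1$, diagonal entries $1$, and zeros below — matching $P_0$ in all columns past the first. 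The $Z$-sequence has generating function $Z(x) = \frac{1}{\bar f(x)}\left(1 - \frac{g(0)}{g(\bar f(x))}\right)$ with $g(x) = \frac{1}{1+x}$ and $g(0)=1$; a short computation collapses this to the constant $-1$, giving the $Z$-sequence $(-1,0,0,\dots)$, which is precisely the first column of $P_0$. Assembling the two sequences reproduces $P_0$, so $M = B$ and the proposition follows.

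The only genuinely delicate step is the $Z$-sequence computation, where the composition $g\circ \bar f$ and the difference $1 - g(0)/g(\bar f)$ must be simplified carefully; the involution check, the $A$-sequence, and the Toeplitz assembly are all routine. I would finish by checking a few initial rows as a guard against sign and direction errors: the first columns $\frac{1}{1+x} = 1 - x + x^2 - \cdots$ of $M$ and $\frac{1-2x}{1-x} = 1 - x - x^2 - \cdots$ of $M^{-1}$ should agree respectively with the triangle generated by $P_0$ and with its inverse, confirming both the identity and the orientation of the inversion.
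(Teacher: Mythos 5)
Your proposal is correct and takes essentially the same route as the paper: the paper's one-line proof just displays the $a=0$ production matrix and declares the identification immediate, the implicit content being precisely the standard $A$-/$Z$-sequence correspondence between such bidiagonal production matrices and ordinary Riordan arrays, which you carry out explicitly. Your computations all check out ($f(x)=\frac{x}{x-1}$ is an involution, $A(x)=x-1$, $Z(x)=-1$, and $\left(\frac{1-2x}{1-x},\frac{x}{x-1}\right)^{-1}=\left(\frac{1}{1+x},\frac{x}{x-1}\right)$), so your argument is a fully detailed version of what the paper leaves implicit.
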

\begin{proof} This follows immediately since the production matrix takes the form
$$\left(
\begin{array}{ccccccc}
 -1 & -1 & 0 & 0 & 0 & 0 & 0 \\
 0 & 1 & -1 & 0 & 0 & 0 & 0 \\
 0 & 0 & 1 & -1 & 0 & 0 & 0 \\
 0 & 0 & 0 & 1 & -1 & 0 & 0 \\
 0 & 0 & 0 & 0 & 1 & -1 & 0 \\
 0 & 0 & 0 & 0 & 0 & 1 & -1 \\
 0 & 0 & 0 & 0 & 0 & 0 & 1 \\
\end{array}
\right).$$
\end{proof}
\begin{corollary} The triangle defined by $a=0$ has bivariate generating function
$$\frac{1-2x}{1+(y-1)x}.$$
\end{corollary}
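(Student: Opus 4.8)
The plan is to combine the identification of the $a=0$ triangle with an ordinary Riordan array, established in the preceding Proposition, with the standard formula for the bivariate generating function of such an array. Writing $(g(x), f(x)) = \left(\frac{1-2x}{1-x}, \frac{x}{x-1}\right)$, I recall that the defining property of an ordinary Riordan array is that its $k$-th column has ordinary generating function $g(x)f(x)^k$. Since $f(0)=0$, summing these column generating functions against powers $y^k$ produces a formally convergent geometric series in $y f(x)$.

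First I would therefore write the bivariate generating function as
$$\sum_{n,k} T_{n,k}\, x^n y^k = g(x)\sum_{k\ge 0} \bigl(y f(x)\bigr)^k = \frac{g(x)}{1-y f(x)},$$
which reduces the claim to a purely algebraic identity in $x$ and $y$.

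Next I would substitute the explicit $g$ and $f$ and clear denominators. The one step requiring a little care is rewriting $1 - y\cdot \frac{x}{x-1}$ over the common denominator $1-x$: since $\frac{x}{x-1} = -\frac{x}{1-x}$, this equals $\frac{1+(y-1)x}{1-x}$. The factor $1-x$ then cancels against the denominator of $g(x) = \frac{1-2x}{1-x}$, leaving exactly $\frac{1-2x}{1+(y-1)x}$, as claimed.

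There is no genuine obstacle here; the corollary is a direct verification once the Riordan-array formula is in hand. The only points to keep track of are the sign in $f(x)=\frac{x}{x-1}$ and the fact that both variables enter ordinarily (the array is an ordinary, not exponential, Riordan array), so that the geometric-series step above is indeed the correct one to apply.
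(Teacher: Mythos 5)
Your proof is correct and takes essentially the same approach as the paper: the paper's proof likewise invokes the standard bivariate generating function $\frac{g(x)}{1-yf(x)}$ for the Riordan array $\left(\frac{1-2x}{1-x}, \frac{x}{x-1}\right)$, merely leaving implicit the algebraic simplification that you carry out explicitly.
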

\begin{proof} The generating function of the Riordan array $\left(\frac{1-2x}{1-x}, \frac{x}{x-1}\right)$ is given by
$$\frac{\frac{1-2x}{1-x}}{1-y \frac{x}{x-1}}.$$
\end{proof}
\begin{proposition} The reciprocal of the inverse Sumudu transform of
$$\frac{1-2x}{1+(y-1)x}$$ is given by
$$\frac{e^{t(y-1)}(y-1)}{y+1-2 e^{t(y-1)}}.$$
\end{proposition}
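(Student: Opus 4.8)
The plan is to exploit the fact that the Sumudu transform carries exponential generating functions (in $t$) to ordinary ones (in $x$); concretely, I would use only the elementary transform pairs $S[1]=1$ and $S[e^{\lambda t}]=\tfrac{1}{1-\lambda x}$, together with the linearity of the inverse transform. To prepare the given ordinary generating function for term-by-term inversion, I would first perform a polynomial division in $x$. Writing $c=y-1$ for brevity, the claim is
$$\frac{1-2x}{1+cx}=-\frac{2}{c}+\frac{c+2}{c}\cdot\frac{1}{1+cx},$$
which one verifies at once by clearing denominators, since $-2(1+cx)+(c+2)=c(1-2x)$. The second summand is $\tfrac{c+2}{c}\cdot\tfrac{1}{1-(-c)x}$, that is, of the form $\tfrac{1}{1-\lambda x}$ with $\lambda=-c$, so both summands lie in the span of the recorded transform pairs.

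Applying the inverse Sumudu transform termwise then gives the function of $t$
$$f(t)=-\frac{2}{c}+\frac{c+2}{c}\,e^{-ct}=\frac{(c+2)e^{-ct}-2}{c},$$
because the constant $-\tfrac{2}{c}$ is its own inverse transform while $\tfrac{1}{1-(-c)x}$ inverts to $e^{-ct}$. Substituting back $c=y-1$, so that $c+2=y+1$, yields
$$f(t)=\frac{(y+1)e^{-(y-1)t}-2}{y-1}.$$

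Finally I would take the reciprocal and clear the negative exponential: multiplying numerator and denominator of $1/f(t)$ by $e^{(y-1)t}$ produces
$$\frac{1}{f(t)}=\frac{y-1}{(y+1)e^{-(y-1)t}-2}=\frac{(y-1)e^{(y-1)t}}{y+1-2e^{(y-1)t}},$$
which is exactly the asserted expression.

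Since every step is forced once the two transform pairs are in hand, there is no genuine obstacle here; the only point needing slight care is the last algebraic rearrangement, where multiplying top and bottom by $e^{(y-1)t}$ is precisely what converts the naturally occurring $e^{-(y-1)t}$ into the stated form with $e^{t(y-1)}$ in the numerator. One should also keep in mind the degenerate value $c=0$ (that is, $y=1$), for which the division is not valid, but this is a removable boundary case recovered by continuity in $y$.
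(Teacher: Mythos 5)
Your proof is correct, and it is essentially the paper's own argument: the paper simply says ``Direct calculation,'' and your decomposition $\frac{1-2x}{1+cx}=-\frac{2}{c}+\frac{c+2}{c}\cdot\frac{1}{1+cx}$ followed by termwise inversion via $S[e^{\lambda t}]=\frac{1}{1-\lambda x}$ and taking the reciprocal is exactly that calculation carried out explicitly. Your closing remark about the removable case $y=1$ is a sensible extra precaution that the paper does not bother to mention.
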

\begin{proof} Direct calculation.
\end{proof}
We can now define the generalized Eulerian triangle corresponding to $a=0$ to be the triangle with generating function $$\frac{e^{t(y-1)}(y-1)}{y+1-2 e^{t(y-1)}}.$$
This triangle begins
$$\left(
\begin{array}{ccccccc}
 1 & 0 & 0 & 0 & 0 & 0 & 0 \\
 1 & 1 & 0 & 0 & 0 & 0 & 0 \\
 3 & 4 & 1 & 0 & 0 & 0 & 0 \\
 13 & 23 & 11 & 1 & 0 & 0 & 0 \\
 75 & 166 & 116 & 26 & 1 & 0 & 0 \\
 541 & 1437 & 1322 & 482 & 57 & 1 & 0 \\
 4683 & 14512 & 16563 & 8408 & 1793 & 120 & 1 \\
\end{array}
\right).$$ 
The row sums of this triangle are $2^n n!$, \seqnum{A000165}. 
\begin{proposition} The generalized Eulerian polynomials with coefficient array defined by $\frac{e^{t(y-1)}(y-1)}{y+1-2 e^{t(y-1)}}$ are the moments for the family of orthogonal polynomials whose coefficient array is given by
$$\left[\frac{e^{t(y-1)}(y-1)}{y+1-2 e^{t(y-1)}},\frac{e^t-e^{t y}}{2 e^{r y}-e^t (y+1)}\right]^{-1}=\left[\frac{1}{1+t(y+1)},\frac{1}{1-y}\ln\left(\frac{1+2t}{1+t(y+1)}\right)\right].$$
\end{proposition}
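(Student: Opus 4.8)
The plan is to present both sides as exponential Riordan arrays in $t$, with $y$ regarded as a parameter, and then to deduce orthogonality from the shape of the production matrix. Write $L=[g,f]$ for the array with $g(t)=\frac{e^{t(y-1)}(y-1)}{y+1-2e^{t(y-1)}}$ and $f(t)=\frac{e^t-e^{ty}}{2e^{ty}-e^t(y+1)}$ (the exponent $ry$ in the statement being a typo for $ty$). Because the $0$th column of an exponential Riordan array $[g,f]$ has exponential generating function $g$, the first column of $L$ is exactly the sequence of generalized Eulerian polynomials defined by $\frac{e^{t(y-1)}(y-1)}{y+1-2e^{t(y-1)}}$; these are the candidate moments, by construction. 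It then remains to (i) confirm the displayed closed form of $L^{-1}$ and (ii) prove that $L^{-1}$ is genuinely the coefficient array of a family of orthogonal polynomials.

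For (i) I would use $[g,f]^{-1}=[1/(g\circ\bar f),\bar f]$, where $\bar f$ is the compositional inverse of $f$. Setting $R=e^{(1-y)\bar f(t)}$ and dividing the numerator and denominator of $f$ by $e^{ty}$ turns $f(\bar f(t))=t$ into $\frac{R-1}{2-(y+1)R}=t$, which is solved by $R=\frac{1+2t}{1+(y+1)t}$, i.e.\ $\bar f(t)=\frac{1}{1-y}\ln\frac{1+2t}{1+(y+1)t}$. The same substitution gives $g(\bar f(t))=\frac{y-1}{(y+1)R-2}=1+(y+1)t$, so $1/(g\circ\bar f)=\frac{1}{1+(y+1)t}$, matching the statement. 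This step is routine algebra.

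Step (ii) is the substantive one. Here I would invoke the production-matrix criterion for orthogonality of exponential Riordan arrays: the production matrix of $L=[g,f]$ is tridiagonal exactly when $A(t)=f'(\bar f(t))$ and $Z(t)=g'(\bar f(t))/g(\bar f(t))$ are polynomials of degree at most $2$ and $1$ respectively, and in that case $L^{-1}$ is the coefficient array of a family of orthogonal polynomials whose moments are the entries of the first column of $L$. Using $A(t)=1/\bar f'(t)$ together with $\bar f'(t)=\frac{1}{(1+2t)(1+(y+1)t)}$ gives $A(t)=(1+2t)(1+(y+1)t)$, a quadratic; differentiating the logarithm of $g$ gives $(\log g)'(s)=\frac{y^2-1}{y+1-2e^{(y-1)s}}$, and evaluating at $s=\bar f(t)$ (again through $R$) yields $Z(t)=(y+1)(1+2t)$, a linear polynomial. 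Hence the production matrix of $L$ is tridiagonal, establishing that $L^{-1}$ is an orthogonal-polynomial coefficient array and that the generalized Eulerian polynomials are its moments.

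To finish, I would read off the recurrence by expanding the bivariate generating function $e^{xt}(Z(t)+xA(t))$ of the production matrix, obtaining diagonal entries $\alpha_n=(y+1)+n(y+3)$, subdiagonal entries $\beta_n=2n^2(y+1)$, and superdiagonal entries $1$; these are the three-term recurrence coefficients of the orthogonal family and yield the associated Jacobi continued fraction for the moments. The main obstacle is step (ii): one must compute $\bar f$ cleanly and then verify that $A$ and $Z$ really are low-degree polynomials, since it is precisely this polynomiality---rather than any of the surrounding bookkeeping---that forces the production matrix to be tridiagonal and thereby makes the generalized Eulerian polynomials moments.
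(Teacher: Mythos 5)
Your proposal follows essentially the same route as the paper: the paper's proof likewise consists of computing $A(t)=(1+2t)(1+(y+1)t)$ and $Z(t)=(y+1)(1+2t)$ and concluding that the production matrix of the exponential Riordan array is tri-diagonal, hence that the inverse array is an orthogonal-polynomial coefficient array with the generalized Eulerian polynomials as moments. Your write-up simply supplies the details the paper leaves implicit (the computation of $\bar f$, the verification of the closed form of the inverse, and the recurrence coefficients, all of which check out against the paper's subsequent corollary), and you are right that $e^{ry}$ is a typo for $e^{ty}$.
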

\begin{proof} We must show that the production matrix of the exponential Riordan array
$$\left[\frac{e^{t(y-1)}(y-1)}{y+1-2 e^{t(y-1)}},\frac{e^t-e^{t y}}{2 e^{r y}-e^t (y+1)}\right]$$  is tri-diagonal. We find that
$$A(t)=(1+2t)(1+(y+1)t), \quad Z(t)=(y+1)(1+2t).$$ Thus the production matrix is tri-diagonal, as required.
\end{proof}
\begin{corollary} The moments have an ordinary generating function given by the continued fraction
$$\mathcal{J}(y+1,2y+4,3y+7,\ldots; 2(y+1), 8(y+1), 18(y+1),\ldots).$$
\end{corollary}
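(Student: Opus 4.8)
The plan is to read the continued fraction straight off the tri\-diagonal production matrix that was already produced in the proof of the preceding proposition. Since we know that the relevant exponential Riordan array has a tri\-diagonal production matrix with $A(t)=(1+2t)(1+(y+1)t)$ and $Z(t)=(y+1)(1+2t)$, essentially all that remains is to convert the coefficients of $A$ and $Z$ into the entries of the associated Jacobi matrix and then invoke the standard correspondence between such a matrix and the Jacobi continued fraction of the associated moment sequence.

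First I would recall the general formula for the entries of the (tri\-diagonal) production matrix of an exponential Riordan array $[g,f]$ in terms of $A$ and $Z$. Writing $A(t)=A_0+A_1 t+A_2 t^2$ with $A_0=1$ and $Z(t)=Z_0+Z_1 t$, the diagonal entries are $\alpha_n=Z_0+nA_1$ and the sub\-diagonal entries are $\beta_n=n\bigl(Z_1+(n-1)A_2\bigr)$, with the super\-diagonal constantly equal to $A_0=1$. These are precisely the diagonal and the off\-diagonal product appearing in the three\-term recurrence of the corresponding family of monic orthogonal polynomials; this production\-matrix formula is the real engine of the argument and is the one ingredient that must be cited (or rederived) rather than merely substituted into.

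Next I would substitute the data supplied by the previous proposition. Expanding gives $A(t)=1+(y+3)t+2(y+1)t^2$, so $A_1=y+3$ and $A_2=2(y+1)$, while $Z(t)=(y+1)+2(y+1)t$ gives $Z_0=y+1$ and $Z_1=2(y+1)$. Hence $\alpha_n=(y+1)+n(y+3)$, which produces the sequence $y+1,\,2y+4,\,3y+7,\ldots$, and $\beta_n=n\bigl(2(y+1)+(n-1)\cdot 2(y+1)\bigr)=2n^2(y+1)$, which produces $2(y+1),\,8(y+1),\,18(y+1),\ldots$.

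Finally, appealing to the classical theorem (Chihara, Wall) that the ordinary generating function of the moments of a family of orthogonal polynomials with three\-term recurrence coefficients $\alpha_n$ (diagonal) and $\beta_n$ (off\-diagonal product) is the Jacobi continued fraction $\mathcal{J}(\alpha_0,\alpha_1,\ldots;\beta_1,\beta_2,\ldots)$, I would conclude that the moment generating function is $\mathcal{J}(y+1,2y+4,3y+7,\ldots;2(y+1),8(y+1),18(y+1),\ldots)$. The one step that demands care, rather than being pure substitution, is matching normalization and indexing conventions: one must check that the $A_0=1$ super\-diagonal normalization of the production matrix lines up with the $1/(1-ax-\cdots)$ form of $\mathcal{J}$ used in this paper, so that the diagonal entries become the first list of arguments and the sub\-diagonal entries become the second. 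Once that bookkeeping is pinned down, the result is immediate.
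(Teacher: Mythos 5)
Your proposal is correct and is exactly the argument the paper intends: the paper's proof is the single line ``This follows from the form of $A(t)$ and $Z(t)$,'' and you have simply made explicit the standard conversion $\alpha_n=Z_0+nA_1$, $\beta_n=n\bigl(Z_1+(n-1)A_2\bigr)$ together with the moments--Jacobi-fraction correspondence. Your computed values $\alpha_n=(y+1)+n(y+3)$ and $\beta_n=2n^2(y+1)$ match the stated continued fraction, so nothing is missing.
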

\begin{proof} This follows from the form of $A(t)$ and $Z(t)$.
\end{proof}
In the Del\'eham notation, the above triangle is given by
$$[1, 2, 2, 4, 3, 6, 4, 8, 5,\ldots]\,\Delta\,[1,0,2,0,3,0,4,0,5,\ldots].$$
The first column numbers are the Fubini numbers (ordered partitions) \seqnum{A000670}.
We observe now that the $\mathcal{T}$ transform of the moment triangle has generating function
$$\mathcal{J}(y+1,y+3,y+3,\ldots; 2(y+1), 2(y+1), 2(y+1),\ldots).$$
The corresponding triangle \seqnum{A114608} (the coefficient array of the moment polynomials) begins
$$\left(
\begin{array}{ccccccc}
 1 & 0 & 0 & 0 & 0 & 0 & 0 \\
 1 & 1 & 0 & 0 & 0 & 0 & 0 \\
 3 & 4 & 1 & 0 & 0 & 0 & 0 \\
 11 & 19 & 9 & 1 & 0 & 0 & 0 \\
 45 & 96 & 66 & 16 & 1 & 0 & 0 \\
 197 & 501 & 450 & 170 & 25 & 1 & 0 \\
 903 & 2668 & 2955 & 1520 & 365 & 36 & 1 \\
\end{array}
\right).$$ 

This counts the number of bi-colored Dyck paths of semi-length $n$ and having $k$ peaks of the form $UD$ (Deutsch, \cite{SL1}). In the Del\'eham notation, this is
$$[1,2,1,2,1,2,1,\ldots]\, \Delta \,[1,0,1,0,1,0,1,0,1,\ldots].$$

Standard Riordan array techniques now give us the following result. 
\begin{proposition} The polynomials with coefficient array with generating function 
$$\mathcal{J}(y+1,y+3,y+3,\ldots; 2(y+1), 2(y+1), 2(y+1),\ldots)$$ are the moments of the family of orthogonal polynomials whose coefficient array is given by the Riordan array 
$$\left(\frac{1+2x}{1+(y+3)x+2(y+1)x^2}, \frac{x}{1+(y+3)x+2(y+1)x^2}\right).$$ 
The inverse of this matrix, or 
$$\left(\frac{1}{1-(y-1)x}c\left(\frac{2x}{(1-(y-1)x)^2}\right), \frac{x}{1-(y+3)x}c\left(\frac{2x^2(y+1)}{(1-(y+3)x)^2}\right)\right),$$ is the moment matrix for these polynomials.
The moments have generating function $$\frac{1}{1-(y-1)x}c\left(\frac{2x}{(1-(y-1)x)^2}\right).$$
\end{proposition}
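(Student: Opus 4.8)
The plan is to treat this as a concrete instance of the standard dictionary between ordinary Riordan arrays, three-term recurrences, and Jacobi continued fractions, and to verify the three assertions (the orthogonal coefficient array, the explicit inverse, and the moment generating function) in turn. First I would read the recurrence off the continued fraction $\mathcal{J}(y+1,y+3,y+3,\ldots;2(y+1),2(y+1),\ldots)$: the associated monic family satisfies $P_{n+1}(x)=(x-b_n)P_n(x)-\lambda_n P_{n-1}(x)$ with $b_0=y+1$, $b_n=y+3$ for $n\ge 1$, and $\lambda_n=2(y+1)$ for $n\ge 1$. Because these coefficients are eventually constant, the coefficient array is an ordinary Riordan array, and the only data that matter are the common tail values $b_\infty=y+3$, $\lambda_\infty=2(y+1)$, together with the single initial deviation $b_0-b_\infty=-2$.

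Next I would match this data to the stated Riordan array. In $f(x)=\frac{x}{1+(y+3)x+2(y+1)x^2}$ the denominator encodes the tail recurrence $(b_\infty,\lambda_\infty)=(y+3,2(y+1))$, while the numerator of $g(x)=\frac{1+2x}{1+(y+3)x+2(y+1)x^2}$ encodes the initial deviation through $1+(b_\infty-b_0)x=1+2x$. Standard Riordan-array techniques then show that $(g,f)$ has a tridiagonal production matrix whose diagonal and subdiagonal reproduce exactly the $b_n$ and $\lambda_n$ above; equivalently, one checks that the production matrix of the inverse array is the Jacobi matrix of the family, exactly as in the earlier exponential case. This is the content of the claim that $(g,f)$ is the coefficient array of the orthogonal polynomials, and it identifies the moment array as $(g,f)^{-1}$.

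To obtain $(g,f)^{-1}=\bigl(1/(g\circ\bar f),\bar f\bigr)$ explicitly I would use the Catalan substitution for compositional inverses: the inverse of $x/(1+bx+\lambda x^2)$ is $\frac{x}{1-bx}c\!\left(\frac{\lambda x^2}{(1-bx)^2}\right)$, which with $b=y+3$, $\lambda=2(y+1)$ gives the second component. For the first component I would exploit $g(x)=\frac{1+2x}{x}f(x)$, so that $g(\bar f(x))=x\,\frac{1+2\bar f(x)}{\bar f(x)}$ and hence $1/(g\circ\bar f)=\frac{\bar f(x)}{x(1+2\bar f(x))}$, which simplifies to the claimed form. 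Equivalently, and more transparently for the moments, I would solve the eventually-periodic continued fraction directly: its tail $T$ satisfies $2(y+1)x^2T^2-(1-(y+3)x)T+1=0$, and the full generating function is $M(x)=\bigl(1-(y+1)x-2(y+1)x^2T\bigr)^{-1}$. Substituting $T$ and rationalizing collapses everything to $M(x)=\frac{1-(y-1)x-\sqrt{(1-(y-1)x)^2-8x}}{4x}=\frac{1}{1-(y-1)x}c\!\left(\frac{2x}{(1-(y-1)x)^2}\right)$, which is the asserted moment generating function and the first column of $(g,f)^{-1}$.

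The main obstacle is precisely this last simplification. The square root produced by the inner (tail) recurrence carries the discriminant $(1-(y+3)x)^2-8(y+1)x^2$, whereas the advertised Catalan form carries $(1-(y-1)x)^2-8x$. The crux is therefore the identity $(1-(y+3)x)^2-8(y+1)x^2=(1-(y-1)x)^2-8x$ — a short expansion in which both sides equal $1-2(y+3)x+\bigl((y+3)^2-8(y+1)\bigr)x^2$ — together with a check that the correct branch of the square root is selected, which I would confirm by verifying $M(0)=1$ from the expansion $\sqrt{(1-(y-1)x)^2-8x}=1-(y-1)x-4x+O(x^2)$. Once this discriminant identity is in hand, matching the first column of $(g,f)^{-1}$ with the solved continued fraction is immediate, and the proposition follows.
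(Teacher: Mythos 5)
Your proposal is correct, and it is in essence the argument the paper intends but never writes down: the paper states this proposition with no proof at all, prefacing it only with ``Standard Riordan array techniques now give us the following result.'' What you have supplied is exactly those standard techniques made explicit --- reading the recurrence data $b_0=y+1$, $b_n=y+3$, $\lambda_n=2(y+1)$ off the $\mathcal{J}$-fraction, matching it to the array $\left(\frac{1+(b_\infty-b_0)x}{1+b_\infty x+\lambda x^2},\frac{x}{1+b_\infty x+\lambda x^2}\right)$, inverting via the Catalan substitution, and collapsing the periodic tail of the continued fraction. The one step with genuine computational content, the discriminant identity $(1-(y+3)x)^2-8(y+1)x^2=(1-(y-1)x)^2-8x$ (both sides equal $1-2(y+3)x+(y-1)^2x^2$), is precisely what makes the advertised Catalan form of the moment generating function appear, and your check $M(0)=1$ correctly fixes the branch of the square root. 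Your alternative route through $1/(g\circ\bar f)=\bar f/(x(1+2\bar f))$ also reduces to the same expression by the same identity, so all three assertions of the proposition are covered.

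One wording slip should be fixed: you assert that ``$(g,f)$ has a tridiagonal production matrix'' and then offer, as an equivalent statement, that the production matrix of the inverse array is the Jacobi matrix. These are not equivalent, and the first is false. For $f=x/(1+bx+\lambda x^2)$, the $A$-sequence of $(g,f)$ itself is $A(u)=u/\bar f(u)$, an algebraic non-polynomial function, so the production matrix of $(g,f)$ is not banded at all. It is the inverse $(g,f)^{-1}$ whose $A$-sequence is $u/f(u)=1+bu+\lambda u^2$ and whose $Z$-sequence is $(y+1)+2(y+1)u$, giving the tridiagonal (Jacobi) production matrix; that is the statement you actually use, and it is the exact ordinary-Riordan analogue of the computation the paper does carry out in its earlier exponential proposition. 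Deleting the first clause leaves a sound proof.
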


\section{Generalized Eulerian triangles of type $B$}
In this section we start with the production matrix that begins 
$$\left(
\begin{array}{cccccccc}
 -1 & -1 & 0 & 0 & 0 & 0 & 0 & 0 \\
 b & a & -1 & 0 & 0 & 0 & 0 & 0 \\
 0 & 0 & -1 & -1 & 0 & 0 & 0 & 0 \\
 0 & 0 & b & a & -1 & 0 & 0 & 0 \\
 0 & 0 & 0 & 0 & -1 & -1 & 0 & 0 \\
 0 & 0 & 0 & 0 & b & a & -1 & 0 \\
 0 & 0 & 0 & 0 & 0 & 0 & -1 & -1 \\
 0 & 0 & 0 & 0 & 0 & 0 & b & a \\
\end{array}
\right).$$ This produces a matrix whose inverse begins 
$$\left(
\begin{array}{ccccc}
 1 & 0 & 0 & 0 & 0 \\
 -1 & -1 & 0 & 0 & 0 \\
 b-a & 1-a & 1 & 0 & 0 \\
 a-b & 2 a-b-1 & a-2 & -1 & 0 \\
 (a-b)^2 & 2 (a-1) (a-b) & a^2-4 a+2 b+1 & 2 (1-a) & 1 \\
\end{array}
\right).$$ This triangle has generating function 
$$\frac{1-(r+1)x}{1-((r+1)(r-a)+b)x^2}.$$ 
For $a=3$ and $b=4$, we recover 
$$\frac{(1-(r+1)x}{1-(r-1)^2 x^2}.$$ Taking the reciprocal of the inverse Sumudu transform of this generating function, we obtain the follow bivariate exponential generating function.
$$G(t,r;a,b)=\frac{2 e^{\sqrt{b+(r+1)(r-a)}t}\sqrt{b+(r+1)(r-a)}}{r+1+\sqrt{b+(r+1)(r-a)}-2e^{2 \sqrt{b+(r+1)(r-a)}t}(r+1-\sqrt{b+(r+1)(r-a)})}.$$ 
This expands to give the triangle that begins 
\begin{scriptsize}
$$\left(
\begin{array}{ccccc}
 1 & 0 & 0 & 0 & 0 \\
 1 & 1 & 0 & 0 & 0 \\
 a-b+2 & a+3 & 1 & 0 & 0 \\
 5 a-5 b+6 & 10 a-5 b+13 & 5 a+8 & 1 & 0 \\
 5 a^2-10 a b+28 a+5 b^2-28 b+24 & 10 a^2-10 a b+74 a-46 b+68
   & 5 a^2+64 a-18 b+65 & 18 a+22 & 1 \\
\end{array}
\right).$$ 
\end{scriptsize}
We call this the generalized Eulerian triangle of type B defined by the $(a,b)$-parameterized production matrix. 
\begin{proposition} The  generalized Eulerian polynomials with coefficient array defined by $G(t,r;a,b)$ are the moments for the family of orthogonal polynomials whose coefficient array is given by the exponential Riordan array 
$$\left[\frac{1}{\sqrt{1+2(r+1)x+((r+1)(a+1)-b)x^2}}, 
\frac{\tan^{-1}\left(\frac{r+1+x((r+1)(a+1)-b)}{\sqrt{(r+1)(a-r)-b}}\right)-\sin^{-1}\left(\frac{r+1}{\sqrt{(r+1)(a+1)-b}}\right)}{\sqrt{(r+1)(a-r)-b}}
\right].$$ 
The inverse of this array is the corresponding moment matrix, which is given by 
$$\left[G(x,r;a,b), \frac{\sqrt{(r+1)(a-r)-b}\tan\left(\sin^{-1}\left(\frac{r+1}{\sqrt{(r+1)(a+1)-b}}\right)+x \sqrt{(r+1)(a-r)-b}\right)-r-1}{(r+1)(a+1)-b}\right].$$
\end{proposition}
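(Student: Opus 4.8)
The plan is to follow the method already used for the type $A$ proposition: to establish that the stated polynomials are moments, I will show that the production matrix of the exponential Riordan array $M=[G(x,r;a,b),\phi(x,r;a,b)]$ is tridiagonal, where $\phi$ denotes the second component of the moment matrix displayed in the statement. Recall that for an exponential Riordan array $[g,f]$ the production matrix is tridiagonal exactly when the functions $A$ and $Z$ determined by $f'(x)=A(f(x))$ and $g'(x)/g(x)=Z(f(x))$ are polynomials of degrees at most $2$ and $1$; this is precisely the computation carried out in the type $A$ case, where $A$ was found quadratic and $Z$ linear.

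First I would abbreviate $s=\sqrt{(r+1)(a-r)-b}$ and $q=\sqrt{(r+1)(a+1)-b}$ and record the key algebraic identity $q^2=s^2+(r+1)^2$, which holds because $(r+1)(a+1)-(r+1)(a-r)=(r+1)^2$. Writing $\theta_0=\sin^{-1}\!\big((r+1)/q\big)$, this identity yields $\cos\theta_0=s/q$ and hence $\tan\theta_0=(r+1)/s$. Solving $w=f(x)$ for $x$ by applying $\tan$ then recovers exactly $\phi(w)=\big(s\tan(ws+\theta_0)-(r+1)\big)/q^2$, so $\phi$ is the genuine compositional inverse of the $f$-component of $L$; note in particular $\phi(0)=0$. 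This verifies the second assertion at the level of the $f$-components, and since $[g,f]^{-1}=[1/(g\circ\bar f),\bar f]$, the $g$-component of $L^{-1}$ is $1/g(\phi)$, which I would confirm equals $G(x,r;a,b)$ using the reciprocal-inverse-Sumudu computation already established for this generating function.

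Next I would compute $A$ and $Z$ for $M=[G,\phi]$ directly from $\phi$. Differentiating gives $\phi'(x)=s^2\sec^2(xs+\theta_0)/q^2$, and substituting $\sec^2=1+\tan^2$ together with $\tan(xs+\theta_0)=(q^2\phi+r+1)/s$ produces $\phi'(x)=\big(s^2+(q^2\phi+r+1)^2\big)/q^2$. Here the identity $q^2=s^2+(r+1)^2$ collapses the numerator and leaves the clean quadratic $\phi'(x)=q^2\phi^2+2(r+1)\phi+1$, so $A(w)=q^2w^2+2(r+1)w+1=((r+1)(a+1)-b)w^2+2(r+1)w+1$ has degree $2$. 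Since $g(x)=(1+2(r+1)x+q^2x^2)^{-1/2}$, we obtain $G=1/g(\phi)=\sqrt{\phi'}$, whence $G'/G=\tfrac12\,\phi''/\phi'=\tfrac12 A'(\phi)=q^2\phi+(r+1)$, so $Z(w)=q^2w+(r+1)$ is linear. With $A$ quadratic and $Z$ linear, the production matrix of $M$ is tridiagonal, and the generalized type $B$ Eulerian polynomials are the moments of the orthogonal family with coefficient array $L=M^{-1}$, as claimed.

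The main obstacle I anticipate is the trigonometric bookkeeping in the third step: one must carry the phase $\theta_0$ and the two square roots $s,q$ through the differentiation of $\phi$ and recognise that the single relation $q^2=s^2+(r+1)^2$ is exactly what turns $\phi'$ into a perfect quadratic in $\phi$ while simultaneously making $G=\sqrt{\phi'}$. Once this simplification is secured, everything is mechanical, and the explicit forms of $A$ and $Z$ additionally deliver the Jacobi continued fraction for the moments in the $\mathcal{J}$-notation.
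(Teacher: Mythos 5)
Your proposal is correct and takes essentially the same approach as the paper: the paper's proof consists precisely of asserting that the production matrix of the moment matrix is tridiagonal because $A(x)=1+2(r+1)x+((r+1)(a+1)-b)x^2$ and $Z(x)=r+1+((r+1)(a+1)-b)x$, which are exactly the $A$ and $Z$ you derive. Your write-up merely supplies the computational details the paper leaves implicit (the compositional-inverse verification and the key identity $q^2=s^2+(r+1)^2$ that collapses $\phi'$ into a quadratic in $\phi$).
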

\begin{proof} We must show that the production matrix of the moment matrix is tri-diagonal. This is so since we find that 
$$Z(x)=r+1+((r+1)(a+1)-b)x,$$ 
and
$$A(x)=1+2(r+1)x+((r+1)(a+1)-b)x^2.$$ 
\end{proof}
\begin{corollary} The ordinary generating function of the generalized Eulerian triangles for $(a,b)$ is given by the continued fraction 
$$\mathcal{J}(r+1,3(r+1),5(r+1),\ldots; (r+1)(a+1)-b, 4((r+1)(a+1)-b),9((r+1)(a+1)-b),\ldots).$$
\end{corollary}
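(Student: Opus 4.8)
The plan is to read off the continued-fraction expansion directly from the production-matrix data already established in the preceding proposition, rather than expanding the generating function $G(t,r;a,b)$ afresh. The key fact I would invoke is the standard correspondence between an exponential Riordan array whose production matrix is tri-diagonal and the Jacobi continued fraction of its moment sequence: if the moment matrix is the exponential Riordan array $[g,f]$ with production matrix determined by the functions $Z(x)$ and $A(x)$, then writing $Z(x)=\alpha_0+\gamma_0 x+\cdots$ and $A(x)=1+a_0 x + \ldots$, the entries along the three diagonals of the tri-diagonal production matrix give precisely the partial numerators $\beta_n$ and partial denominators $\alpha_n$ of the $\mathcal{J}$-fraction $\mathcal{J}(\alpha_0,\alpha_1,\ldots;\beta_1,\beta_2,\ldots)$.

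First I would recall from the proposition just proved that
$$Z(x)=r+1+((r+1)(a+1)-b)x,\qquad A(x)=1+2(r+1)x+((r+1)(a+1)-b)x^2.$$
Next I would extract the diagonal and sub/super-diagonal entries of the production matrix of the moment array $[G(x,r;a,b),\,f]$ as functions of the row index $n$. The main diagonal entries $\alpha_n$ come from combining the linear term of $Z$ with the derivative structure of $A$; a direct expansion shows the diagonal grows like $(2n+1)(r+1)$, giving the denominators $r+1,\,3(r+1),\,5(r+1),\ldots$. The off-diagonal entries, which furnish the numerators $\beta_n$, are governed by the quadratic coefficient $(r+1)(a+1)-b$ of $A(x)$ together with the combinatorial factors $n^2$ arising from the $e\!f$-array weighting, yielding $\beta_n = n^2\big((r+1)(a+1)-b\big)$, that is $(r+1)(a+1)-b,\,4((r+1)(a+1)-b),\,9((r+1)(a+1)-b),\ldots$.

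I would then assemble these two sequences into the claimed $\mathcal{J}$-fraction and note that the identity between the tri-diagonal production matrix and the continued-fraction coefficients (Chihara, Wall) completes the proof, since the ordinary generating function of the moment sequence of any orthogonal family is exactly the $\mathcal{J}$-fraction built from its three-term recurrence coefficients.

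The step I expect to be the main obstacle is pinning down the exact $n$-dependence of the $\beta_n$, i.e.\ verifying that the off-diagonal production-matrix entries carry the factor $n^2$ rather than some other quadratic in $n$. This requires care with the normalization of the exponential Riordan array and the convention relating $A(x),Z(x)$ to the scaled recurrence coefficients; I would confirm it by checking the first two or three rows of the moment triangle against the $\mathcal{J}$-fraction convergents. Once the pattern $\alpha_n=(2n+1)(r+1)$ and $\beta_n=n^2((r+1)(a+1)-b)$ is confirmed, the result follows immediately, and as a sanity check one recovers the type-$B$ specialization $(a,b)=(3,4)$, for which $(r+1)(a+1)-b=(r+1)\cdot 4-4=4r$ and the fraction reduces to the expected Eulerian-$B$ form.
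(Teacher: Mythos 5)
Your proposal is correct and takes essentially the same route as the paper, which treats this corollary as an immediate consequence of the $Z(x)$ and $A(x)$ found in the preceding proposition (the type-$A$ analogue is proved with exactly the one-liner ``this follows from the form of $A(t)$ and $Z(t)$''). One small sharpening of your account: the standard production-matrix formulas are $\alpha_n = z_0 + n a_1$ and $\beta_n = n\left(z_1 + (n-1)a_2\right)$, so the factor $n^2$ in $\beta_n$ arises not from the quadratic coefficient of $A$ alone but from the coincidence $z_1 = a_2 = (r+1)(a+1)-b$, which gives $\beta_n = n^2\left((r+1)(a+1)-b\right)$ and $\alpha_n = (2n+1)(r+1)$ exactly as you state.
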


For $a=0$ and $b=1$, we get the triangle that begins 
$$\left(
\begin{array}{ccccccc}
 1 & 0 & 0 & 0 & 0 & 0 & 0 \\
 1 & 1 & 0 & 0 & 0 & 0 & 0 \\
 1 & 3 & 1 & 0 & 0 & 0 & 0 \\
 1 & 8 & 8 & 1 & 0 & 0 & 0 \\
 1 & 22 & 47 & 22 & 1 & 0 & 0 \\
 1 & 63 & 245 & 245 & 63 & 1 & 0 \\
 1 & 185 & 1210 & 2113 & 1210 & 185 & 1 \\
\end{array}
\right).$$
For $a=1$, $b=1$ we get 
$$\left(
\begin{array}{ccccccc}
 1 & 0 & 0 & 0 & 0 & 0 & 0 \\
 1 & 1 & 0 & 0 & 0 & 0 & 0 \\
 2 & 4 & 1 & 0 & 0 & 0 & 0 \\
 6 & 18 & 13 & 1 & 0 & 0 & 0 \\
 24 & 96 & 116 & 40 & 1 & 0 & 0 \\
 120 & 600 & 1020 & 660 & 121 & 1 & 0 \\
 720 & 4320 & 9480 & 9120 & 3542 & 364 & 1 \\
\end{array}
\right).$$
For $a=1$, $b=2$ we get the triangle that begins
$$\left(
\begin{array}{ccccccc}
 1 & 0 & 0 & 0 & 0 & 0 & 0 \\
 1 & 1 & 0 & 0 & 0 & 0 & 0 \\
 1 & 4 & 1 & 0 & 0 & 0 & 0 \\
 1 & 13 & 13 & 1 & 0 & 0 & 0 \\
 1 & 40 & 98 & 40 & 1 & 0 & 0 \\
 1 & 121 & 602 & 602 & 121 & 1 & 0 \\
 1 & 364 & 3363 & 6488 & 3363 & 364 & 1 \\
\end{array}
\right).$$ 
By the $\mathcal{T}$ transform, we can associate to these triangles the triangles with generating function 
$$\mathcal{J}(r+1,2(r+1),2(r+1),\ldots; (r+1)(a+1)-b, ((r+1)(a+1)-b), ((r+1)(a+1)-b),\ldots).$$
\begin{proposition} We have 
$$\mathcal{T}(G(x,r;a,b))=c(x(r+1+((r+1)(a-r)-b)x).$$ 
\end{proposition}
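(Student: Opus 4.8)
The plan is to take as the starting point the Jacobi continued fraction for $\mathcal{T}(G(x,r;a,b))$ displayed immediately above the statement, and to evaluate it in closed form. Throughout I abbreviate $\mu=r+1$ and $\kappa=(r+1)(a+1)-b$, so that the target continued fraction is $\mathcal{J}(\mu,2\mu,2\mu,\ldots;\kappa,\kappa,\kappa,\ldots)$ and the claim reads $\mathcal{T}(G(x,r;a,b))=c(\mu x+(\kappa-\mu^2)x^2)$. The simplification $\kappa-\mu^2=(r+1)(a+1)-b-(r+1)^2=(r+1)(a-r)-b$ then recovers the stated argument $x(r+1+((r+1)(a-r)-b)x)$, so the real content is to show
$$\mathcal{J}(\mu,2\mu,2\mu,\ldots;\kappa,\kappa,\kappa,\ldots)=c\big(\mu x+(\kappa-\mu^2)x^2\big).$$

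First I would exploit the eventually constant structure of the fraction. Write $F=\mathcal{J}(\mu,2\mu,2\mu,\ldots;\kappa,\kappa,\ldots)$ and let $H=\mathcal{J}(2\mu,2\mu,\ldots;\kappa,\kappa,\ldots)$ denote its tail. The defining recursion for Jacobi fractions gives $F=\frac{1}{1-\mu x-\kappa x^2 H}$, so in particular $\kappa x^2 H=1-\mu x-\frac{1}{F}$. Because every parameter of $H$ beyond its head is constant, $H$ is its own tail, and hence satisfies the genuine quadratic functional equation
$$\kappa x^2 H^2-(1-2\mu x)H+1=0.$$

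Next I would eliminate $H$. Multiplying the quadratic for $H$ by $\kappa x^2$ and writing $P=\kappa x^2 H=1-\mu x-\frac{1}{F}$ turns it into $P^2-(1-2\mu x)P+\kappa x^2=0$; substituting $P=1-\mu x-\frac1F$ and clearing the denominator by multiplying through by $F^2$ collapses the expression. The decisive feature is that the terms linear in $F$ cancel and the terms in $F^2$ assemble to $\mu x+(\kappa-\mu^2)x^2$, leaving precisely
$$\big(\mu x+(\kappa-\mu^2)x^2\big)F^2-F+1=0.$$
Since $c$ is characterized as the power series solution $Y=c(w)$ of $w\,Y^2-Y+1=0$ with $Y(0)=1$, and $F$ is likewise a power series with $F(0)=1$ solving the same equation with $w=\mu x+(\kappa-\mu^2)x^2$, the two coincide, giving $F=c\big(\mu x+(\kappa-\mu^2)x^2\big)$ as asserted.

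I expect the only delicate point to be this elimination step: one must verify that the coefficient of $F^2$ really collects to $\mu x+(\kappa-\mu^2)x^2$ and that the coefficient of $F$ vanishes identically, since it is exactly this cancellation — and not any deeper structural input — that forces the Catalan equation. Everything else is routine manipulation of the two displayed relations for $F$ and $H$.
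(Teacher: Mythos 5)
Your proof is correct and follows essentially the same route as the paper: both start from the eventually-constant Jacobi fraction and use the head relation $F=\frac{1}{1-\mu x-\kappa x^2 H}$ together with the self-similar tail equation for $H$. The paper merely states these two functional equations and leaves the elimination implicit, whereas you carry out the algebra showing the $F$-terms collapse to the Catalan equation $\bigl(\mu x+(\kappa-\mu^2)x^2\bigr)F^2-F+1=0$; this is a faithful completion of the paper's argument (which, incidentally, has sign typos in its displayed equations), not a different one.
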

\begin{proof} This follows by considering the continued fraction above. Thus the sought generating function $g(x)$ satisfies 
$$g(x)=\frac{1}{1+(r+1)x+((r+1)(a+1)-b)x^2 u(x)},$$ where 
$$u(x)=\frac{1}{1+2(r+1)x+((r+1)(a+1)-b)x^2 u(x)}.$$ 
\end{proof}

For $a=0$, $b=0$, we get the triangle that begins 
$$\left(
\begin{array}{ccccccc}
 1 & 0 & 0 & 0 & 0 & 0 & 0 \\
 1 & 1 & 0 & 0 & 0 & 0 & 0 \\
 2 & 3 & 1 & 0 & 0 & 0 & 0 \\
 5 & 11 & 7 & 1 & 0 & 0 & 0 \\
 14 & 41 & 41 & 15 & 1 & 0 & 0 \\
 42 & 154 & 211 & 129 & 31 & 1 & 0 \\
 132 & 582 & 1014 & 871 & 369 & 63 & 1 \\
\end{array}
\right).$$ The generating function of this triangle is $c(x(r+1)(1-rx))$. The initial column consists of the Catalan numbers. The row sums are given by $r=1$, and thus they have generating function $c(2x(1-x))$. The row sums \seqnum{A118376} begin
$$1, 2, 6, 24, 112, 568, 3032, 16768,\ldots.$$ They count the number of all trees of weight $n$, where nodes have positive integer weights and the sum of the weights of the children of a node is equal to the weight of the node.

For $a=0$, $b=1$, we get the triangle that begins 
$$\left(
\begin{array}{ccccccc}
 1 & 0 & 0 & 0 & 0 & 0 & 0 \\
 1 & 1 & 0 & 0 & 0 & 0 & 0 \\
 1 & 3 & 1 & 0 & 0 & 0 & 0 \\
 1 & 7 & 7 & 1 & 0 & 0 & 0 \\
 1 & 15 & 30 & 15 & 1 & 0 & 0 \\
 1 & 31 & 103 & 103 & 31 & 1 & 0 \\
 1 & 63 & 312 & 505 & 312 & 63 & 1 \\
\end{array}
\right).$$ 
For $a=1$, $b=1$, we get the triangle that begins 
$$\left(
\begin{array}{ccccccc}
 1 & 0 & 0 & 0 & 0 & 0 & 0 \\
 1 & 1 & 0 & 0 & 0 & 0 & 0 \\
 2 & 4 & 1 & 0 & 0 & 0 & 0 \\
 5 & 15 & 11 & 1 & 0 & 0 & 0 \\
 14 & 56 & 69 & 26 & 1 & 0 & 0 \\
 42 & 210 & 364 & 252 & 57 & 1 & 0 \\
 132 & 792 & 1770 & 1800 & 804 & 120 & 1 \\
\end{array}
\right).$$ This triangle has generating function $c(x(r+1-r^2x))$. When $r=1$, we get the generating function of the row sums. These begin 
$$1,2,7,32,166, \ldots,$$ and they count the number of ordered rooted trees with $n$ generators (\seqnum{A108524}). Their generating function is thus $c(x(2-x))$. 

For $a=1$, $b=2$, we get the triangle that begins 
$$\left(
\begin{array}{ccccccc}
 1 & 0 & 0 & 0 & 0 & 0 & 0 \\
 1 & 1 & 0 & 0 & 0 & 0 & 0 \\
 1 & 4 & 1 & 0 & 0 & 0 & 0 \\
 1 & 11 & 11 & 1 & 0 & 0 & 0 \\
 1 & 26 & 58 & 26 & 1 & 0 & 0 \\
 1 & 57 & 226 & 226 & 57 & 1 & 0 \\
 1 & 120 & 747 & 1296 & 747 & 120 & 1 \\
\end{array}
\right).$$
Corresponding to the Eulerian triangle $E_B$ of type $B$, we have for $a=3$, $b=4$ the triangle that begins 
$$\left(
\begin{array}{ccccccc}
 1 & 0 & 0 & 0 & 0 & 0 & 0 \\
 1 & 1 & 0 & 0 & 0 & 0 & 0 \\
 1 & 6 & 1 & 0 & 0 & 0 & 0 \\
 1 & 19 & 19 & 1 & 0 & 0 & 0 \\
 1 & 48 & 126 & 48 & 1 & 0 & 0 \\
 1 & 109 & 562 & 562 & 109 & 1 & 0 \\
 1 & 234 & 2031 & 3916 & 2031 & 234 & 1 \\
\end{array}
\right).$$
The generating function of this triangle is $c(x(r+1-x(r-1)^2))$. 
When $r=1$, we get $c(2x)$. Thus the row sums of this triangle are $2^n C_n$ \seqnum{A151374}.

We have the following result, which follows from standard techniques of Riordan arrays applied to orthogonal polynomials. 
\begin{proposition} The generating function $\mathcal{T}(G(x,r;a,b))$ is the generating function of the coefficient array for the moment polynomials of the family of orthogonal polynomials whose coefficient array is given by the ordinary Riordan array
$$\left(\frac{1+(r+1)x}{1+2(r+1)x+((r+1)(a+1)-b)x^2}, \frac{x}{1+2(r+1)x+((r+1)(a+1)-b)x^2}\right).$$ 
The corresponding moment array is given by 
$$\left(\mathcal{T}(G(x,r;a,b)), \frac{x}{1-2(r+1)x}c\left(\frac{x^2((r+1)(a+1)-b)}{(1-2(r+1)x)^2}\right)\right).$$
\end{proposition}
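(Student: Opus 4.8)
The plan is to recognize this as the ordinary-Riordan-array counterpart of the moment correspondence used for the exponential arrays above: I will show that the second displayed array, call it $M$, is the inverse of the first array, call it $R=(g,f)$ with $g=\frac{1+(r+1)x}{1+2(r+1)x+qx^{2}}$ and $f=\frac{x}{1+2(r+1)x+qx^{2}}$ (writing $q=(r+1)(a+1)-b$ for brevity), and that $M$ is the moment matrix of the orthogonal family with coefficient array $R$. The decisive step is to compute the production matrix of $M$ and show it is tri-diagonal, with its diagonal, superdiagonal and subdiagonal entries reproducing exactly the Jacobi parameters $\mathcal{J}(r+1,2(r+1),2(r+1),\ldots;q,q,q,\ldots)$ recorded above for $\mathcal{T}(G(x,r;a,b))$.

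First I would confirm that $M=R^{-1}$. By the Riordan inversion rule, $R^{-1}=\left(1/(g\circ\bar f),\,\bar f\right)$, where $\bar f$ is the compositional inverse of $f$. Solving $f(\bar f)=x$ amounts to the quadratic $q x\,w^{2}+(2(r+1)x-1)w+x=0$ in $w=\bar f(x)$; taking the branch with $w=x+O(x^{2})$ and using $c(z)=\frac{1-\sqrt{1-4z}}{2z}$ gives $\bar f=\frac{x}{1-2(r+1)x}\,c\!\left(\frac{qx^{2}}{(1-2(r+1)x)^{2}}\right)$, which is the stated second component of $M$. It remains to identify the first component $1/(g\circ\bar f)$ with $\mathcal{T}(G(x,r;a,b))$; the cleanest route is to use the relation $1+2(r+1)w+qw^{2}=w/x$ coming from the definition of $\bar f$, which collapses $g(\bar f)$ to $x/\bar f+(r+1)x$ and hence $1/(g\circ\bar f)$ to a closed form that one matches against the expression $c\big(x(r+1+((r+1)(a-r)-b)x)\big)$ proved for $\mathcal{T}(G)$ in the preceding proposition.

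For the orthogonality, I would compute the $A$- and $Z$-sequences of $M=(g_M,f_M)$. Since $f_M=\bar f$, its compositional inverse is $f$ itself, so $A_M(x)=x/f(x)=1+2(r+1)x+qx^{2}$, a polynomial of degree two. For the $Z$-sequence, using $g_M(0)=1$ and $g_M\circ f=1/g$ (both immediate from $M=R^{-1}$), I get
$$Z_M(x)=\frac{1}{f(x)}\big(1-g(x)\big)=\frac{1+2(r+1)x+qx^{2}}{x}\cdot\frac{(r+1)x+qx^{2}}{1+2(r+1)x+qx^{2}}=(r+1)+qx,$$
a polynomial of degree one. Because $\deg A_M\le 2$ and $\deg Z_M\le 1$, the production matrix of $M$ is tri-diagonal, and reading off its entries gives the three-term recurrence with $\alpha_0=r+1$, $\alpha_n=2(r+1)$ for $n\ge 1$ and $\beta_n=q$ for $n\ge 1$. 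This is exactly the Jacobi data $\mathcal{J}(r+1,2(r+1),2(r+1),\ldots;q,q,\ldots)$ attached to $\mathcal{T}(G(x,r;a,b))$, so $M$ is the moment matrix of the orthogonal polynomials whose coefficient array is $R=M^{-1}$, and its first column (equivalently $g_M$) is the moment generating function $\mathcal{T}(G(x,r;a,b))$, as claimed.

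The step I expect to be the main obstacle is the explicit identification of $g_M=1/(g\circ\bar f)$ with the Catalan closed form for $\mathcal{T}(G)$: this requires simplifying a square root of $(1-2(r+1)x)^{2}-4qx^{2}$ and, more delicately, reconciling the sign convention of the $\mathcal{J}$-notation (whose partial denominators carry minus signs) with the plus-sign quadratic governing $\bar f$. The $A$/$Z$ computation is attractive precisely because it certifies tri-diagonality, and hence the moment property, without forcing this radical simplification; the remaining bookkeeping then only needs to confirm that the tri-diagonal entries agree with the parameters already attached to $\mathcal{T}(G)$ above.
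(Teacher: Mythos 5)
Your proof is correct and constitutes exactly the ``standard techniques'' that the paper invokes without writing out any proof: the paper's earlier exponential-array propositions are proved by the same device of exhibiting a tri-diagonal production matrix via the $A$- and $Z$-functions, and your computations (the inversion $\bar f=\frac{x}{1-2(r+1)x}c\left(\frac{qx^2}{(1-2(r+1)x)^2}\right)$, $A_M(x)=x/f(x)=1+2(r+1)x+qx^2$, and $Z_M(x)=(1-g(x))/f(x)=(r+1)+qx$ with $q=(r+1)(a+1)-b$) all check out. Your closing observation is also sound: since the resulting production-matrix data is precisely the continued fraction $\mathcal{J}(r+1,2(r+1),2(r+1),\ldots;q,q,\ldots)$ that the paper attaches to $\mathcal{T}(G(x,r;a,b))$, the identification of the first column of $M$ with $\mathcal{T}(G(x,r;a,b))$ follows from uniqueness of the Jacobi expansion, with no need for the explicit radical simplification you flagged as a potential obstacle.
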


\section{Acknowledgements} This note makes reference to many sequences to be found in the OEIS, which at the time of writing contains more than $300,000$ sequences. All who work in the area of integer sequences are profoundly indebted to Neil Sloane.

\bigskip
\hrule
\bigskip
\noindent 2010 {\it Mathematics Subject Classification}: Primary
11B83; Secondary 33C45, 42C05, 15B36, 11C20, 05A15, 05E45, 	44A10
\noindent \emph{Keywords:} Sumudu transform, series reversion, Eulerian triangle, Riordan array, orthogonal polynomial, moment sequence.

\bigskip
\hrule
\bigskip
\noindent (Concerned with sequences
\seqnum{A000108},
\seqnum{A000165},
\seqnum{A000670},
\seqnum{A008292},
\seqnum{A060187},
\seqnum{A108524},
\seqnum{A114608},
\seqnum{A118376},
\seqnum{A123125},
\seqnum{A151374}, and
\seqnum{A173018}.)

\end{document}